\documentclass[11pt]{amsart}
\usepackage{amsfonts}
\usepackage{amsfonts,bbm,mathrsfs,amssymb,amsmath,amsthm}
\textheight=7.5in \textwidth=4.7in
\newtheorem{thm}{Theorem}[section]
\newtheorem{theorem}{Theorem}[section]

\newtheorem{lemma}{Lemma}[section]

\newtheorem{prob}{Problem}[section]
\newtheorem*{cconj}{Chern Conjecture}
\newtheorem*{spp}{The Second Pinching Problem}
\theoremstyle{definition}

\theoremstyle{remark}
\newtheorem*{rmk}{Remark}

\DeclareMathOperator{\tr}{Trace}
\DeclareMathOperator{\dv}{Div}
\DeclareMathOperator{\Hess}{Hess}

\begin{document}

\title[New result on Chern conjecture]{New result on Chern conjecture for minimal hypersurfaces and its application}
\dedicatory{Dedicated to Professor Shiing-shen Chern on the occasion of his 105th birthday}
\thanks{Research supported by the National Natural Science Foundation of China, Grant Nos. 11531012, 11371315; and
the China Postdoctoral Science Foundation, Grant No. 2016M590530.}

\author{Hongwei Xu}
\author{Zhiyuan Xu}

\address{Center of Mathematical Sciences \\ Zhejiang University \\ Hangzhou 310027 \\ China}
\email{xuhw@cms.zju.edu.cn; srxwing@zju.edu.cn}
\date{}
\keywords{Chern conjecture, minimal hypersurface, the second pinching theorem, the second fundamental form, self-shrinker}
\subjclass[2010]{53C44; 53C42}

\numberwithin{equation}{section}

\maketitle

\begin{abstract}
We verify that if $M$ is a compact minimal hypersurface in
$\mathbb{S}^{n+1}$ whose squared length of the second fundamental
form satisfying $0\leq |A|^2-n\leq\frac{n}{22}$, then $|A|^2\equiv
n$ and $M$ is a Clifford torus. Moreover, we prove that if $M$ is a
complete self-shrinker with polynomial volume growth in $\mathbb{R}^{n+1}$ whose
equation is given by (\ref{selfshr}), and if the squared length of
the second fundamental form of $M$ satisfies
$0\leq|A|^2-1\leq\frac{1}{21}$, then $|A|^2\equiv1$ and $M$ is a
round sphere or a cylinder. Our results improve the
rigidity theorems due to Q. Ding and Y. L. Xin \cite{DX1,DX2}.
\end{abstract}

\section{Introduction}\label{sec1}

The famous Chern Conjecture for minimal hypersurfaces in a sphere
was proposed by S. S. Chern \cite{Chern,CDK} in 1968 and 1970, and
was listed in the Problem Section by S. T. Yau \cite{Y2} in 1982, as
stated
\begin{cconj}\label{cj}
{\rm (A)(Standard version)} Let $M$ be a compact minimal
hypersurface with constant scalar curvature in the unit sphere
$\mathbb{S}^{n+1}$. Then the possible values of the scalar curvature
of $M$ form a
discrete set. \\
{\rm (B)(Refined version)} Let $M$ be a compact minimal hypersurface
with constant scalar curvature in the unit sphere
$\mathbb{S}^{n+1}$. Then $M$ is isoparametric. \\
{\rm (C)(Stronger version)} Let $M$ be a compact minimal
hypersurface in the unit sphere $\mathbb{S}^{n+1}$. Denote by $A$
the second fundamental form of $M$. Set $a_k=(k-sgn(5-k))n$, for
$k\in \{m\in\mathbb{Z}^+; 1\leq m\leq 5\,\}$. Then we have\\
(i) For any fixed $k\in \{m\in\mathbb{Z}^+; 1\leq m\leq 4\,\}$, if
$a_k\leq |A|^2\leq a_{k+1}$, then $M$ is isoparametric, and
$|A|^2\equiv a_k$ or $|A|^2\equiv a_{k+1}$.\\
(ii) If $|A|^2\geq a_{5}$, then $M$ is isoparametric, and
$|A|^2\equiv a_5$.
\end{cconj}

It is well-known that the Chern Conjecture consists of serval
pinching problems. In the late 1960s, Simons, Lawson, and Chern-do
Carmo-Kobayashi \cite{CDK,L1,S} solved the first pinching problem
for compact minimal hypersurfaces in the unit sphere
$\mathbb{S}^{n+1}$ and verified that if $|A|^2\leq n$, then
$|A|^2\equiv0$ and $M$ is the great sphere $\mathbb{S}^{n}$, or
$|A|^2\equiv n$ and $M$ is one of the Clifford torus
$\mathbb{S}^{k}(\sqrt{\frac{k}{n}})\times
\mathbb{S}^{n-k}(\sqrt{\frac{n-k}{n}})$, $\,1\le k\le n-1$. More
generally, they proved a rigidity theorem for compact minimal
submanifolds in a sphere under a sharp pinching condition. Further
developments on the first pinching problem have been made by many
other authors \cite{ChengNa,D,GXXZ,LL,Lu,Shen,X0,X1,Y1}, etc.

As a special part of the Chern Conjecture, the following problem has
been open for more than 40 years.
\begin{spp}
Let $M$ be a compact minimal hypersurface in the unit sphere $\mathbb{S}^{n+1}$.\\
(i) If $|A|^2$ is constant, and if $n\leq |A|^2\leq 2n$, then $|A|^2=n$, or $|A|^2=2n$.\\
(ii) If $n\leq |A|^2\leq 2n$, then $|A|^2\equiv n$, or $|A|^2\equiv 2n$.
\end{spp}

In 1983, Peng and Terng \cite{PT1,PT2} initiated the study of the
second pinching problem for minimal hypersurfaces in a unit sphere,
and made the following breakthrough on the Chern Conjecture.
\begin{theorem}\label{thm1}
Let $M$ be a compact minimal hypersurface in the unit sphere $\mathbb{S}^{n+1}$.\\
(i) If $|A|^2$ is constant, and if $n\leq |A|^2\leq n+\frac{1}{12n}$, then
$|A|^2=n$.\\
(ii) If $n\le 5$, and if
$n\leq |A|^2\leq n+\tau_1(n)$, where
$\tau_1(n)$ is a positive constant depending only on $n$, then
$|A|^2\equiv n$.
\end{theorem}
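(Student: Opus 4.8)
The plan is to start from Simons' identity for a minimal hypersurface $M^n\subset\mathbb S^{n+1}$. Writing $S=|A|^2$ and denoting by $h_{ij}$, $h_{ijk}$, $h_{ijkl}$ the components of $A$ and its successive covariant derivatives in a local orthonormal frame, one has $\Delta h_{ij}=(n-S)h_{ij}$, and hence
\[
\tfrac12\Delta S=|\nabla A|^2+S(n-S).
\]
Integrating over the closed manifold gives $\int_M|\nabla A|^2=\int_M S(S-n)$, which is nonnegative in the pinching range $S\ge n$. Rigidity cannot come from this alone: one needs a complementary estimate bounding $\int_M S(S-n)$ from above by a quantity that is small relative to $\int_M(S-n)$, so that in the regime $S-n\le\varepsilon$ with $\varepsilon$ small it is forced that $S\equiv n$. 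The whole task is to produce such an estimate, and the tools are Simons-type identities for the higher moments of $A$ and for $|\nabla A|^2$, combined with sharp pointwise inequalities for traceless symmetric endomorphisms.

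For part (i), where $S$ is constant, the identity above already gives $|\nabla A|^2=S(S-n)$ pointwise, a constant. The plan is to differentiate the moments $f_3=\operatorname{tr}(A^3)$ and $f_4=\operatorname{tr}(A^4)$ twice, using the Codazzi equation (total symmetry of $h_{ijk}$), the Ricci identities for $h_{ijkl}$, and $\Delta h_{ij}=(n-S)h_{ij}$, to obtain pointwise formulas of the shape $\tfrac12\Delta f_3=3\sum_{i,j,k,l}h_{ij}h_{ikl}h_{jkl}+\tfrac32(n-S)f_3$ and an analogous one for $\tfrac12\Delta f_4$ in terms of $\sum h_{ij}h_{kl}h_{ijm}h_{klm}$, $|\nabla A|^2$, $f_3$, $f_4$ and $S$. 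Integrating kills the Laplacians, and since $S$ (hence $|\nabla A|^2$) is constant the surviving relations become algebraic constraints among $S$, $\int_M f_3^2$, $\int_M f_4$ and $\int_M|\nabla^2 A|^2$. Into these I would feed the sharp pointwise inequalities available for a traceless symmetric endomorphism of Hilbert--Schmidt norm $\sqrt S$ --- namely $f_4\ge S^2/n$, $|f_3|\le\tfrac{n-2}{\sqrt{n(n-1)}}\,S^{3/2}$, and a bound for $\sum_{i,j,k,l}h_{ij}h_{ikl}h_{jkl}$ in terms of $S$ and $|\nabla A|^2$ --- which collapses the system to a single polynomial inequality in the unknown constant $S$. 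One then checks that on the interval $n\le S\le n+\tfrac{1}{12n}$ this inequality is only consistent with $S=n$.

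For part (ii), with $n\le5$ and no constancy assumption, $|\nabla A|^2-S(S-n)$ vanishes only in the mean, so the gradient terms must be carried throughout. The plan is: (a) to establish Kato-type inequalities, in particular $|\nabla S|^2\le 4S|\nabla A|^2$ and a refined lower bound for $|\nabla^2 A|^2$ in terms of the gradient of $|\nabla A|$, controlling how much $|\nabla A|$ can oscillate; (b) to compute $\tfrac12\Delta f_3$ and a Simons-type identity for $\tfrac12\Delta|\nabla A|^2$, involving $|\nabla^2 A|^2$, curvature, and cross terms quadratic in both $A$ and $\nabla A$; (c) to integrate these against carefully chosen test functions --- typically suitable powers of $(S-n)$ and of $S$ --- and integrate by parts; and (d) to assemble the output into a single inequality of the form $\int_M(S-n)\bigl(c_n-C_n(S-n)\bigr)\varphi\le 0$ with constants $c_n>0$, $C_n>0$ and a nonnegative weight $\varphi$. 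Choosing $\tau_1(n)$ so small that $C_n\,\tau_1(n)<c_n$ then forces $S\equiv n$. The dimension restriction $n\le5$ enters at step (d): the indefinite cross terms between $A$ and $\nabla A$, as well as $\sum h_{ij}h_{ikl}h_{jkl}$, are controlled only through the sharp bound on $f_3$ and eigenvalue estimates for $A$, and the resulting dimensional constants have the required sign only in low dimensions.

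The step I expect to be the main obstacle, in both parts but overwhelmingly in part (ii), is the bookkeeping and sign control of step (d): one must interpolate among several pointwise inequalities, each sharp only on part of the space of eigenvalue configurations of $A$, track how gradient terms are redistributed by integration by parts, verify that the final constant $c_n$ is genuinely positive, and extract an explicit admissible $\tau_1(n)$ --- respectively the threshold $\tfrac{1}{12n}$. This numerology is precisely the place where later work, including the present paper, obtains improvements, by sharpening the estimates for $\int_M f_3^2$ and for $\int_M|\nabla^2 A|^2$.
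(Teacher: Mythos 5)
First, a clarification: the paper never proves the statement you were assigned. Theorem~A is Peng and Terng's theorem (cited as \cite{PT1,PT2}) and appears purely as background; what the paper does supply, in Section~2, are the identities (2.6)--(2.12) that are the backbone of the Peng--Terng argument. So the comparison has to be against that argument. Your broad outline is in the right family -- Simons' identity for $S$, second-order identities for $f_3$, $f_4$ and for $|\nabla A|^2$, integration over a closed manifold, and a terminal sign argument -- and your formula $\tfrac12\Delta f_3 = \tfrac32(n-S)f_3 + 3\sum h_{ij}h_{ikl}h_{jkl}$ is correct. You also correctly anticipate that the whole difficulty is in the sign control. But the two pointwise estimates that actually make the numbers close are not the ones you name, and this is a genuine gap rather than bookkeeping.

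The first key input is a lower bound for $|\nabla^2 A|^2$, but not of the form ``in terms of $\nabla|\nabla A|$''. What is needed is
\[
|\nabla^2 A|^2\ \ge\ \tfrac34\sum_{i\ne j}(h_{ijij}-h_{jiji})^2,
\]
obtained by fully symmetrizing the $4$-tensor $\nabla^2 A$; via the Ricci identity the right-hand side becomes the explicit polynomial $\sum_{i,j}(\lambda_i-\lambda_j)^2(1+\lambda_i\lambda_j)^2 = 2(Sf_4-f_3^2-S^2)-2S(S-n)$, and it is \emph{this} quantity -- not $f_4\ge S^2/n$ together with Okumura's inequality $|f_3|\le\frac{n-2}{\sqrt{n(n-1)}}S^{3/2}$ -- that gets played against $\int_M(B_1-2B_2)=\int_M(Sf_4-f_3^2-S^2-\tfrac14|\nabla S|^2)$. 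Okumura's bound is the wrong tool here: its equality case ($n-1$ coincident eigenvalues) is disjoint from the Clifford configuration at $S=n$, so it is not sharp at the very model you are trying to characterize. The second key input is the pointwise estimate for totally symmetric traceless $3$-tensors,
\[
3(B_1-2B_2)\ \le\ \tfrac{\sqrt{17}+1}{2}\,S\,|\nabla A|^2,
\]
with $B_1=\sum h_{ijk}h_{ijl}h_{km}h_{ml}$, $B_2=\sum h_{ijk}h_{klm}h_{im}h_{jl}$; you gesture at a bound for $\sum h_{ij}h_{ikl}h_{jkl}$ but do not produce it, and the specific constant $\sigma=\frac{\sqrt{17}+1}{2}$ is what the whole part~(ii) hinges on. Once these are in hand, part~(ii) closes with \emph{unweighted} integration -- no test functions or powers of $(S-n)$ are needed; the identity $|\nabla S|^2=\tfrac12\Delta S^2+2S^2(S-n)-2S|\nabla A|^2$ already converts $\int|\nabla S|^2$ into $\int|\nabla A|^2$-controlled quantities -- and the terminal inequality takes the form
\[
0\ \le\ \int_M\Bigl(\tfrac{\sqrt{17}-4}{4}\,S+\tfrac{k+9}{4k}\,n-\tfrac32\Bigr)|\nabla A|^2\,dM.
\]
The coefficient is negative near $S=n$ iff roughly $\frac{\sqrt{17}-3}{4}n<\frac32$, i.e.\ $n\le 5$. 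So the dimension restriction is a direct consequence of the constant $\frac{\sqrt{17}+1}{2}$, not of ``eigenvalue estimates for $A$'' as you suggest; and the later improvements (Ding--Xin, and the present paper) proceed precisely by refining the two estimates above -- adding the $\tfrac{3S(S-n)^2}{2(n+4)}$ term to the $|\nabla^2 A|^2$ bound and replacing $\sigma S$ by $S+4+C_1(n)G^{1/3}$ -- rather than by introducing weights.
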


During the past three decades, there have been some important
progresses on the Chern Conjecture
\cite{AB,C,C2,C3,Cheng,DX1,GeTang,Mu,SWY,SY,Ver,WX,XT,XX1,XX2,YC0,YC1,YC2,Zhang1},
etc. In 1993, Chang \cite{C} proved Chern Conjecture (A) in
dimension three. Yang-Cheng \cite{YC0,YC1,YC2} improved the pinching
constant $\frac{1}{12n}$ in Theorem A(i) to $\frac{n}{3}$. Later,
Suh-Yang \cite{SY} improved this pinching constant to
$\frac{3}{7}n$.

\par In 2007, Wei and Xu \cite{WX} proved that if $M$ is a compact minimal hypersurface in
$\mathbb{S}^{n+1}$, $n=6, 7$, and if $n\leq |A|^2\leq n+ \tau_2(n)$,
where $\tau_2(n)$ is a positive constant depending only on $n$, then
$|A|^2\equiv n$. Later, Zhang \cite{Zhang1} extended the second
pinching theorem due to Peng-Terng \cite{PT2} and Wei-Xu \cite{WX}
to the case of $n=8$. In 2011, Ding and Xin \cite{DX1} verified the
following important rigidity theorem, as stated
\begin{theorem}\label{thm2}
Let $M$ be an $n$-dimensional compact minimal hypersurface in the
unit sphere $\mathbb{S}^{n+1}$. If $n\geq6$, the squared norm of the
second fundamental form satisfies $0\leq |A|^2-n\leq
\frac{n}{23}$, then $|A|^2\equiv n$, i.e., $M$ is a Clifford torus.
\end{theorem}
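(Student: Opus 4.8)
The plan is to run a sequence of integral identities and estimates in the style of Simons and Peng--Terng, sharpening every constant with the algebraic inequalities that become available once $|A|^2$ is close to $n$. Throughout write $S=|A|^2$ and recall that, $M$ being minimal in $\mathbb{S}^{n+1}$, Simons' identity reads $\tfrac12\Delta S=|\nabla A|^2+S(n-S)$. Since $M$ is closed, integrating gives $\int_M|\nabla A|^2\,dv=\int_M S(S-n)\,dv$, which is $\geq 0$ under the hypothesis $S\geq n$; hence it suffices to show this integral vanishes, for then $S\equiv n$ and the Chern--do Carmo--Kobayashi/Lawson classification identifies $M$ with a Clifford torus $\mathbb{S}^{k}(\sqrt{k/n})\times\mathbb{S}^{n-k}(\sqrt{(n-k)/n})$.

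First I would record the second-order Simons-type identity: differentiating Simons' equation and commuting derivatives via the Ricci/Bochner formulas produces an expression for $\tfrac12\Delta|\nabla A|^2$ in terms of $|\nabla^2 A|^2$, $S|\nabla A|^2$, and cubic/quartic contractions of $A$ against $\nabla A\otimes\nabla A$ and against $\nabla^2 A$. Integrating by parts over $M$ then yields an identity schematically of the form $\int_M|\nabla^2 A|^2 + \int_M \big(\text{positive multiple of }S\big)\,|\nabla A|^2 = \int_M f_3\cdot(\text{quadratic in }\nabla A) + \int_M(\text{quartic in }A)\,|\nabla A|^2 + \cdots$, where $f_3:=\operatorname{tr}(A^3)$, with all constants explicit. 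Alongside this I would keep the first-level consequences: multiplying Simons' identity by $S$ and by $(S-n)$ and integrating by parts gives control of $\int_M S^2(S-n)$ and of $\int_M S|\nabla A|^2$ in terms of $\int_M|\nabla S|^2$ and lower-order quantities.

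The heart of the argument is to bound the indefinite terms on the right. Here I would use: (i) $\operatorname{tr} A=0$, hence the sharp inequality $|f_3|\leq\frac{n-2}{\sqrt{n(n-1)}}\,S^{3/2}$; (ii) a refined Kato-type inequality for minimal hypersurfaces relating $|\nabla A|^2$ and $|\nabla S|^2$; (iii) in the pinched regime $0\leq S-n\leq\frac{n}{23}$, sharpened lower and upper bounds for $f_4:=\operatorname{tr}(A^4)$ in terms of $S^2$ and $S-n$ (so that the quartic contractions equal their Clifford-torus value up to an error of order $S-n$). Feeding these into the identities above, applying Cauchy--Schwarz to merge the various gradient terms, and using the pinching bound to make the coefficient of every bad term strictly dominated by the good terms $\int_M|\nabla^2 A|^2$, $\int_M S|\nabla A|^2$, $\int_M S^2(S-n)$, one is led to an inequality of the form $\int_M(S-n)\big(\tfrac{n}{23}-(S-n)\big)\,\Phi\,dv\leq 0$ with $\Phi\geq 0$; since $S-n\geq 0$ and $S-n\leq\frac{n}{23}$, every factor is nonnegative, so the integrand vanishes pointwise, in particular $S(S-n)\equiv 0$, giving $S\equiv n$.

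The main obstacle is exactly step (iii) together with the bookkeeping in the previous paragraph: the cubic term $\int_M f_3\cdot(\text{quadratic in }\nabla A)$ and the quartic terms carry no sign, and forcing the admissible pinching constant down to $n/23$ (and further to $n/22$ in the present paper) requires tracking all constants sharply, coupling the optimal $f_3$ bound with the best Kato constant, and, if necessary, localizing the estimate according to the size of $S-n$. The hypothesis $n\geq 6$ enters because several of these algebraic inequalities --- and the positivity of certain coefficients produced by the integrations by parts --- only hold in that range.
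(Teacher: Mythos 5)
Your outline correctly identifies the general skeleton—Simons' identity, a second‐order Simons identity for $\tfrac12\Delta|\nabla A|^2$, integration by parts, and careful tracking of constants—but it misses the ingredient that actually lets Ding and Xin push the pinching constant down to $n/23$, and the tools you name in its place would not get you there.

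The second–order Simons identity produces the cubic contractions
$B_1=\sum h_{ijk}h_{ijl}h_{km}h_{ml}$ and $B_2=\sum h_{ijk}h_{klm}h_{im}h_{jl}$,
not ``$f_3$ times a quadratic in $\nabla A$.'' The classical way to bound the bad term $3(B_1-2B_2)$ is Peng--Terng's $3(B_1-2B_2)\le\frac{\sqrt{17}+1}{2}S|\nabla A|^2$, which is too lossy: it only gives pinching constants of Peng--Terng/Wei--Xu/Zhang size. Ding--Xin's decisive step (stated as Lemma~\ref{A-2B} in this paper) is the refined algebraic estimate
\[
3(B_1-2B_2)\le\bigl(S+4+C_1(n)\,G^{1/3}\bigr)|\nabla A|^2,\qquad n\ge 6,\ n\le S\le\tfrac{16}{15}n,
\]
where $G=\sum_{i,j}(\lambda_i-\lambda_j)^2(1+\lambda_i\lambda_j)^2$. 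The whole proof hinges on playing this against the lower bound $|\nabla^2A|^2\ge\tfrac34 G+\tfrac{3S(S-n)^2}{2(n+4)}$ and the integral identity $\tfrac12\int_M G=\int_M[(B_1-2B_2)-|\nabla A|^2+\tfrac14|\nabla S|^2]$, followed by Young's inequality to absorb the $G^{1/3}|\nabla A|^2$ term into $\tfrac34 G$ plus $|\nabla A|^3$, and then the divergence theorem plus Cauchy--Schwarz to turn $\int_M|\nabla A|^3$ back into controlled quantities. None of this is achievable with the Okumura bound $|f_3|\le\frac{n-2}{\sqrt{n(n-1)}}S^{3/2}$, pointwise bounds on $f_4$, or a Kato inequality; those inequalities cannot see the structure of $B_1-2B_2$ and in particular cannot detect the cancellation that $G$ small implies $B_1-2B_2$ small. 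Moreover $n\ge 6$ enters precisely because Lemma~\ref{A-2B} requires it (for $n\le 5$ the paper substitutes Peng--Terng's and Zhang's constants instead), not because of ``positivity of coefficients from integration by parts'' as you speculated.

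Finally, the shape of the final inequality you aim for, $\int_M(S-n)\bigl(\tfrac{n}{23}-(S-n)\bigr)\Phi\le 0$, is not what emerges and would not yield rigidity without extra work (it is vacuous where $S-n=\tfrac{n}{23}$). The actual conclusion is an inequality of the form $0\le g_1(n)\int_M|\nabla A|^2+g_2(n)\int_M(S-n)|\nabla A|^2$ with $g_1(n),g_2(n)<0$ for $n\ge 6$ — a polynomial sign analysis — which forces $\nabla A\equiv 0$ directly.
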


In the present paper, we first give a refined version of Ding-Xin's rigidity theorem for minimal hypersurfaces in a sphere.
\begin{thm}\label{mthm1}
Let $M$ be an $n$-dimensional compact minimal hypersurface in the
unit sphere $\mathbb{S}^{n+1}$. If the squared norm of the
second fundamental form satisfies $0\leq |A|^2-n\leq\frac{n}{22}$, then $|A|^2\equiv n$ and $M$ is a Clifford torus.
\end{thm}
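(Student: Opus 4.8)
The plan is to reduce the statement to the assertion $|A|^{2}\equiv n$: once that is known, minimality together with $|A|^{2}\equiv n$ forces $M$ to be a Clifford torus by the equality case of the Chern--do Carmo--Kobayashi theorem. So write $S=|A|^{2}$ and suppose, toward a contradiction, that $S\not\equiv n$; since $M$ is compact and $n\le S\le\tfrac{23}{22}n$ everywhere, $S$ is then strictly larger than $n$ on a nonempty open set. I would follow the integral scheme initiated by Peng--Terng \cite{PT1,PT2} and refined by Wei--Xu \cite{WX} and Ding--Xin \cite{DX1}, indicating where the threshold can be pushed from $\tfrac n{23}$ to $\tfrac n{22}$. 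Recall Simons' identity for a minimal hypersurface of $\mathbb{S}^{n+1}$,
\[
\tfrac12\Delta S=|\nabla A|^{2}+(n-S)S ,
\]
and its second-order companion, the Peng--Terng identity, which expresses $\tfrac12\Delta|\nabla A|^{2}$ as the sum of the coercive term $|\nabla^{2}A|^{2}$, lower-order terms governed by $|\nabla A|^{2}$ and $|\nabla S|^{2}$, and quartic curvature terms bilinear in $A$ and in $\nabla A$. Setting $u:=S-n\ge0$ and integrating Simons' identity multiplied by $1,u,u^{2},\dots$ over $M$ produces the first-order relations
\[
\int_{M}|\nabla A|^{2}=\int_{M}uS ,\qquad \int_{M}u\,|\nabla A|^{2}+\tfrac12\int_{M}|\nabla u|^{2}=\int_{M}u^{2}S ,
\]
and their higher-weighted analogues; the essential feature is that every right-hand side carries a factor of $u$, which is pointwise at most $n/22$.

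The main step is to integrate the Peng--Terng identity against a suitable weight, combine the result with weighted forms of Simons' identity, and exploit the delicate cancellation among the terms of order $n\,|\nabla A|^{2}$. The quartic curvature terms are estimated pointwise using the Peng--Terng inequalities, Okumura's inequality $|\operatorname{tr}(A^{3})|\le\tfrac{n-2}{\sqrt{n(n-1)}}\,S^{3/2}$, and the bounds $\tfrac1nS^{2}\le\operatorname{tr}(A^{4})\le S^{2}$, while the gradient terms $|\nabla S|^{2}$ and $|\nabla|\nabla A|^{2}|^{2}$ are controlled by a Kato-type inequality for $\nabla A$ and Cauchy--Schwarz. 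Feeding in the first-order relations above, one is led to an integral inequality of the schematic shape
\[
\int_{M}|\nabla^{2}A|^{2}\;\le\;(c_{1}n+c_{2})\int_{M}u\,|\nabla A|^{2}+c_{3}\int_{M}u^{2}S+c_{4}\int_{M}|\nabla A|^{4}
\]
with explicit positive constants $c_{i}$: apart from the last, genuinely dangerous term, every summand on the right is suppressed by a factor $u\le n/22$ relative to the left-hand side.

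It remains to dispose of the enemy term $\int_{M}|\nabla A|^{4}$. Since $M$ is minimal in $\mathbb{S}^{n+1}\subset\mathbb{R}^{n+2}$, its mean curvature vector as a submanifold of $\mathbb{R}^{n+2}$ has length at most $n$, so the Michael--Simon Sobolev inequality holds on $M$ with a constant depending only on $n$; applying it to a suitable power of $|\nabla A|$, combined with a Kato inequality and Hölder's inequality and with the fact that $\int_{M}|\nabla A|^{2}=\int_{M}uS$ is itself small because $u\le n/22$, one bounds $\int_{M}|\nabla A|^{4}$ by $\varepsilon\int_{M}|\nabla^{2}A|^{2}$ plus a multiple of $u$-weighted integrals of $|\nabla A|^{2}$. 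Substituting back and tracking all the constants shows that, when $0\le u\le n/22$, the surviving inequality reads $0\le-\delta\int_{M}u\,|\nabla A|^{2}$ for some $\delta>0$. Hence $u\,|\nabla A|^{2}\equiv0$; Simons' identity then forces $|\nabla A|^{2}\equiv0$, and integrating $\tfrac12\Delta S=(n-S)S\le0$ over $M$ gives $(n-S)S\equiv0$, so $S\equiv n$, contradicting $S\not\equiv n$. The whole difficulty is quantitative: each ingredient above --- the pointwise bound on the quartic terms, Okumura's inequality, the bounds on $\operatorname{tr}(A^{4})$, the Kato inequality, and, most sensitively, the constant in the Michael--Simon Sobolev inequality --- leaks a definite amount, and improving Ding--Xin's threshold $n/23$ to $n/22$ amounts to extracting all of them in sharpest form and combining them with no waste; no new geometric mechanism is needed beyond the one in \cite{PT1,PT2,DX1}. (I would present the computation for $n\ge6$, the remaining cases $n\le5$ requiring only minor changes.)
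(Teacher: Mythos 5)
Your proposal tracks the general Peng--Terng framework (Simons' identity, the second-order identity for $\Delta|\nabla A|^{2}$, integrate and try to close an inequality), but it departs from the paper's actual mechanism at the two places that make or break the threshold, and in both places the departure is a gap rather than a shortcut.

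First, the paper's key structural input is the quantity
$G=\sum_{i\neq j}t_{ij}^{2}$ with $t_{ij}=h_{ijij}-h_{jiji}$, together with the lower bound $|\nabla^{2}A|^{2}-\tfrac{3S(S-n)^{2}}{2(n+4)}\geq\tfrac34 G$ and, for $n\geq6$, the Ding--Xin refinement
$3(B_{1}-2B_{2})\leq(S+4+C_{1}(n)G^{1/3})|\nabla A|^{2}$
(Lemma~\ref{A-2B} here, Lemma~3.2 in \cite{DX1}). Your plan replaces this with the plain Peng--Terng bound $3(B_{1}-2B_{2})\leq\sigma S|\nabla A|^{2}$ plus Okumura-type trace inequalities. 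But $\sigma=\tfrac{\sqrt{17}+1}{2}\approx2.56$, so the coefficient of $|\nabla A|^{2}$ coming from this term alone is about $2.56\,n$, which overwhelms the $-(2n+3-S)$ produced by the Bochner term for all $n\geq2$; that is exactly why Peng--Terng's scheme only reaches $n\leq5$ and why the $G^{1/3}$-correction was introduced. Without some analogue of it, there is no mechanism in your sketch that could give a dimension-free constant, let alone push $n/23$ to $n/22$. The Okumura and $\operatorname{tr}(A^{4})$ bounds you quote are not how the paper controls $B_{1}-2B_{2}$, and they do not substitute for the $G$-structure: the paper instead uses the identity $\tfrac12\int_{M}G=\int_{M}\bigl[(B_{1}-2B_{2})-|\nabla A|^{2}+\tfrac14|\nabla S|^{2}\bigr]$ to recycle $G$ between the coercive $|\nabla^{2}A|^{2}$ term and the $(B_{1}-2B_{2})$ term, which is the cancellation that wins.

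Second, the ``enemy term'' in the paper is $\int_{M}|\nabla A|^{3}$, not $\int_{M}|\nabla A|^{4}$, and it is handled by an elementary integration by parts: from $\tfrac12\Delta S=S(n-S)+|\nabla A|^{2}$ and Cauchy--Schwarz one gets
$\int_{M}|\nabla A|^{3}\leq\int_{M}S(S-n)|\nabla A|+\epsilon\int_{M}|\nabla^{2}A|^{2}+\tfrac1{16\epsilon}\int_{M}|\nabla S|^{2}$,
after which everything is closed with Young's inequality and an explicit parameter choice. No Sobolev inequality enters. Your proposal instead invokes the Michael--Simon Sobolev inequality to absorb $\int|\nabla A|^{4}$. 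This is both different and problematic: the Michael--Simon constant and the exponent $\tfrac{n}{n-1}$ degrade with $n$, so the absorption constants you would obtain are dimension-dependent, whereas the conclusion to be proved is a dimension-free pinching gap $n/22$. Moreover, the heuristic that $\int_{M}|\nabla A|^{2}=\int_{M}uS$ is ``small'' does not hold uniformly in $n$: $uS$ can be of order $n^{2}/22$. The sentence ``one bounds $\int_{M}|\nabla A|^{4}$ by $\varepsilon\int_{M}|\nabla^{2}A|^{2}$ plus $u$-weighted integrals'' is asserted, not derived, and in my view would not survive an attempt to carry it out with explicit constants. So while the high-level strategy is cousin to the paper's, the two load-bearing estimates --- the $G^{1/3}$ refinement of the $(B_{1}-2B_{2})$ bound, and the $|\nabla A|^{3}$ integration-by-parts trick --- are missing, and the Sobolev replacement you propose for the latter is not a viable substitute.
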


Moreover, we consider the isometric immersion $X: M \rightarrow \mathbb{R}^{n+1}$.
If the position vector $X$ evolves in the direction of the mean curvature vector $\vec{H}$, then it gives rise
to a solution to the mean curvature flow
$$\left\{\begin{array}{llll} \frac{\partial}{\partial t}X(x, t)=\vec{H}(x, t),\,\,
x\in M,\\
X(x, 0)=X(x).
\end{array} \right.$$
An important class of solutions to the above mean curvature flow equations are
self-shrinkers, which satisfy
\begin{equation}\label{selfshr}
H=-X^N,
\end{equation}
where $X^N$ is the projection of $X$ on the unit inner normal vector $\xi$, i.e., $X^N=<X,\xi>$.

Rigidity problems of self-shrinkers have been studied extensively
\cite{CaoLi,ChengWei,CM,DX3,DX2,Hu2,LeSe,LiWei}. In 2011, Le and
Sesum \cite{LeSe} proved that any $n$-dimensional complete
self-shrinker with polynomial volume growth in $\mathbb{R}^{n+1}$
whose squared norm of the second fundamental form satisfies
$|A|^2<1$ must be a hyperplane. Afterwards, Cao and Li \cite{CaoLi}
generalized this rigidity result to arbitrary codimensional cases
and proved that if $M$ is an $n$-dimensional complete self-shrinker
with polynomial volume growth in $\mathbb{R}^{n+q}$, and if
$|A|^2\leq1$, then $M$ must be one of the generalized cylinders.
Under the assumption that $|A|$ is constant, Cheng and Wei
\cite{ChengWei} proved that if $M$ is an $n$-dimensional complete
self-shrinker with polynomial volume growth in $\mathbb{R}^{n+1}$,
and if $0\leq|A|^2-1\leq\frac{3}{7}$, then $|A|^2=1$.

In 2014, Ding and Xin \cite{DX2} proved the second pinching theorem
for self-shrinkers in the Euclidean space.
\begin{theorem}\label{thm3}
Let $M$ be an $n$-dimensional complete self-shrinker with polynomial volume growth in $\mathbb{R}^{n+1}$.
If the squared norm of the second fundamental form satisfies
$0\leq|A|^2-1\leq0.022$, then $|A|^2\equiv1$ and $M$ is a round sphere or a cylinder.
\end{theorem}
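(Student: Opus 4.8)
The plan is to adapt the Peng--Terng integral method \cite{PT1,PT2}, in the refined form later developed by Ding and Xin, to the weighted (Gaussian) geometry of a self-shrinker. Write $f=|X|^2/2$ and let $\mathcal{L}=\Delta-\langle X,\nabla\,\cdot\,\rangle$ be the drift Laplacian associated to $e^{-f}d\mu$. The point of departure is Colding--Minicozzi's self-shrinker version of Simons' identity \cite{CM}: $\mathcal{L}h_{ij}=(1-|A|^2)h_{ij}$, whence
\begin{equation*}
\tfrac12\,\mathcal{L}|A|^2=|\nabla A|^2+(1-|A|^2)|A|^2 .
\end{equation*}
The hypothesis forces $1\le|A|^2\le1.022$, so $A$ is bounded; combined with the polynomial volume growth of $M$, this legitimizes the weighted divergence theorem $\int_M(\mathcal{L}u)\,v\,e^{-f}d\mu=-\int_M\langle\nabla u,\nabla v\rangle\,e^{-f}d\mu$ for the functions used below (the integration-by-parts lemma of \cite{CM,CaoLi}). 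Setting $g:=|A|^2-1\ge0$ and testing the Simons identity against $g\,e^{-f}$ gives the basic relation $\int_M g\,|\nabla A|^2\,e^{-f}d\mu+\tfrac12\int_M|\nabla g|^2\,e^{-f}d\mu=\int_M g^2|A|^2\,e^{-f}d\mu$.

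The next step is to produce the higher-order identities. I would differentiate the Simons identity to obtain a Bochner-type formula for $\mathcal{L}|\nabla A|^2$ in terms of $|\nabla^2A|^2$, $|A|^2|\nabla A|^2$, cubic contractions of $A$ against $\nabla A$, and the commutator terms generated by the drift $\langle X,\nabla\,\cdot\,\rangle$, and I would likewise compute $\mathcal{L}g^2$ and the evolution of $f_3=\tr(A^3)$ and $f_4=\tr(A^4)$. A genuine new feature compared with the minimal hypersurface case in $\mathbb{S}^{n+1}$ is that $\tr A=H=-X^N$ is no longer constant and must be carried through the computation; it is, however, controlled by $\mathcal{L}H=(1-|A|^2)H$. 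Integrating these identities against $e^{-f}d\mu$ and against $g\,e^{-f}d\mu$ and forming a suitable linear combination, one should reach an inequality of the schematic form
\begin{equation*}
\int_M g\Big[\alpha\,|\nabla A|^2+\beta\,|\nabla^2A|^2-P\big(|A|^2,f_3,f_4\big)\Big]e^{-f}d\mu\le0
\end{equation*}
with explicit nonnegative constants $\alpha,\beta$ and a polynomial $P$.

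The heart of the matter is then the algebra needed to show that the bracket is strictly positive on the set $\{\,0<g\le0.022\,\}$. For this I would invoke the Kato-type inequalities bounding $|\nabla A|^2$ below in terms of $|\nabla|A||^2$ and of $|\nabla H|^2$ (available because $h_{ijk}$ is totally symmetric by Codazzi, the ambient space being flat), the Peng--Terng pointwise lemmas that bound $|\nabla^2A|^2$ from below and control the cubic terms, and the sharp eigenvalue inequalities for symmetric operators --- notably $f_4\le|A|^4$ and their refinements measuring how far the principal curvatures are from a round-sphere or cylinder configuration, together with an Okumura-type bound for $f_3$. It is the interplay of these estimates with the coefficients coming from the drift-Laplacian identities that pins the admissible window at $0\le|A|^2-1\le0.022$ and forces $g\equiv0$, i.e. $|A|^2\equiv1$.

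Finally, once $|A|^2\equiv1$ the Simons identity collapses to $|\nabla A|^2\equiv0$, so $A$ is parallel. By the classical classification of hypersurfaces of Euclidean space with parallel second fundamental form, $M$ is an open part of some $\mathbb{S}^m(r)\times\mathbb{R}^{n-m}$; substituting into the self-shrinker equation \eqref{selfshr} forces $r=\sqrt m$, so $M$ is the round sphere $\mathbb{S}^n(\sqrt n)$ (when $m=n$) or the cylinder $\mathbb{S}^m(\sqrt m)\times\mathbb{R}^{n-m}$ (when $1\le m\le n-1$); the remaining case $m=0$ is excluded since a hyperplane has $|A|^2=0$. I expect the main obstacle to be the third paragraph: orchestrating the several weighted integral identities together with the refined tensorial inequalities so that the bracket remains nonnegative up to the stated threshold, the extra drift terms making every coefficient estimate more delicate than in the compact spherical setting.
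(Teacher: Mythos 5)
Your overall strategy is in the right spirit and aligns with Ding--Xin's argument (which this paper sharpens in Theorem~\ref{mthm2}): work in the Gaussian-weighted geometry, start from the Colding--Minicozzi Simons identity $\tfrac12\mathcal{L}|A|^2=|\nabla A|^2+(1-|A|^2)|A|^2$, obtain a formula for $\mathcal{L}|\nabla A|^2$ involving $|\nabla^2A|^2$ and the cubic contractions $B_1,B_2$, and close the loop with an integral inequality whose coefficients are forced negative under the pinching. Your final classification step (parallel $A$ forces $\mathbb{S}^m(\sqrt m)\times\mathbb{R}^{n-m}$) is correct. So the skeleton is right.

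However, the heart of the argument is precisely the third paragraph you flag as ``the main obstacle,'' and there your proposal is too vague to know whether it closes. The two estimates that actually make the constant come out are not the ones you list. First, the raw Peng--Terng bound $3(B_1-2B_2)\le\sigma S|\nabla A|^2$ with $\sigma=\tfrac{\sqrt{17}+1}{2}$ is not strong enough; what is needed is Ding--Xin's refined pointwise inequality
\[
3(B_1-2B_2)\le\bigl(S+C_2\,G^{1/3}\bigr)|\nabla A|^2,\qquad G=\sum_{i,j}\mu_i^2\mu_j^2(\mu_i-\mu_j)^2 ,
\]
from \cite{DX2} (recorded here as (\ref{2.4.11})), paired with the symmetrization lower bound $|\nabla^2A|^2\ge\tfrac34 G$ (see (\ref{2.4.12})) so that the $G^{1/3}$ term can be absorbed by Young's inequality. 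Neither Okumura's inequality for $f_3$ nor a Kato-type inequality relating $|\nabla A|$ to $|\nabla|A||$ or $|\nabla H|$ plays a role; invoking them is a red herring. Second, the term $\int_M|\nabla A|^3\rho\,dM$ produced by Young's inequality has to be controlled, and this is done by a specific integration-by-parts trick: write $|\nabla A|^2=S^2-S+\tfrac12\mathcal{L}S$, test against $|\nabla A|\rho$, then apply Cauchy--Schwarz twice with free parameters (see (\ref{2.4.16})--(\ref{2.4.18})); without naming this device the proposal has a genuine gap, since nothing else bounds the cubic term. Finally, your schematic inequality $\int_M g[\cdots]\rho\,dM\le 0$ is not the form the computation produces: the natural outcome (see (\ref{2.4.20})--(\ref{2.4.21})) is
\[
0\le c_1\!\int_M|\nabla A|^2\rho\,dM+c_2\!\int_M(S-1)|\nabla A|^2\rho\,dM,\qquad c_1,c_2<0,
\]
which forces $\nabla A\equiv0$ directly, and only then $|A|^2\equiv1$ from the Simons identity. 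The two routes are logically interchangeable, but if you insist on factoring $g=S-1$ out of every term you will have to re-derive the weighted integral formulas in a form that doesn't naturally arise, so you should not expect the coefficients to cooperate. In short: right framework, but the two load-bearing estimates (the $G^{1/3}$ lemma and the $|\nabla A|^3$ reduction) are missing, and the proposed final inequality has the wrong structure.
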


In Section \ref{sec3}, we improve Ding-Xin's pinching constant in Theorem \ref{thm3} and prove the following rigidity theorem for self-shrinkers in the Euclidean space.
\begin{thm}\label{mthm2}
Let $M$ be an $n$-dimensional complete self-shrinker with polynomial volume growth in $\mathbb{R}^{n+1}$.
If the squared norm of the second fundamental form satisfies
$0\leq|A|^2-1\leq\frac{1}{21}$, then $|A|^2\equiv1$ and $M$ is a round sphere or a cylinder.
\end{thm}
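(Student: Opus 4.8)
The plan is to mirror the strategy used for the minimal hypersurface case (Theorem A), transplanting the integral-estimate machinery of Peng–Terng and Ding–Xin to the weighted setting governed by the drift Laplacian $\mathcal{L} = \Delta - \langle X, \nabla \cdot\rangle$ associated with the Gaussian measure $e^{-|X|^2/2}\,dV$. First I would record the basic structure equations for a self-shrinker satisfying (\ref{selfshr}): Simons-type identities give $\mathcal{L}|A|^2 = 2|\nabla A|^2 + 2|A|^2(1-|A|^2) \cdot(\text{terms})$, and more precisely $\tfrac12\mathcal{L}|A|^2 = |\nabla A|^2 - |A|^2(|A|^2 - 1) + |A|^2 - |A|^4 + \dots$; the point is that under $0\le |A|^2 - 1 \le \tfrac{1}{21}$ the function $f = |A|^2 - 1$ is small and nonnegative, and one wants to show $f\equiv 0$. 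Because $M$ has polynomial volume growth, all the integrations by parts against $e^{-|X|^2/2}$ are justified (this is where the volume-growth hypothesis and the results of Le–Sesum / Cao–Li enter to control boundary terms), so I can freely integrate $\mathcal{L}$-identities to get $\int f = 0$-type conclusions once the sign of the integrand is pinned down.

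The key steps, in order, would be: (1) derive the weighted Simons identity and a companion identity for $\mathcal{L}|\nabla A|^2$, keeping track of the cubic and quartic curvature terms $\sum h_{ij}h_{jk}h_{ki}$-type contractions; (2) following Peng–Terng, introduce the auxiliary quantities $f_3 = \sum \lambda_i^3$ and $f_4 = \sum \lambda_i^4$ (here using that for a hypersurface $A$ is a symmetric endomorphism with eigenvalues $\lambda_i$ summing to the trace $H = \langle X,\xi\rangle$), and bound the dangerous terms by $|A|^2$ and $|\nabla A|^2$ using the elementary inequality $f_4 \le |A|^4 - $ (a positive multiple of the "traceless part"), refined as in Ding–Xin; (3) establish a weighted Kato-type inequality $|\nabla|A||^2 \le c\,|\nabla A|^2$ with the sharp constant $c = 3/(n+2)$ or the Peng–Terng improvement thereof, which is what converts a coefficient like $\tfrac{1}{23}$ into $\tfrac{1}{22}$ or $\tfrac{1}{21}$; (4) combine the identities from (1)–(3) into a single weighted integral inequality of the shape $\int_M \big(\alpha |A|^2 + \beta\big) f \, e^{-|X|^2/2} \le 0$ for suitable positive constants $\alpha, \beta$ whenever the pinching constant is at most $\tfrac{1}{21}$; (5) conclude $f\equiv 0$, i.e. $|A|^2\equiv 1$, and then invoke the classification (Cao–Li, or the original self-shrinker rigidity) to identify $M$ as a round sphere $\mathbb{S}^n(\sqrt n)$ or a cylinder $\mathbb{S}^k(\sqrt k)\times\mathbb{R}^{n-k}$.

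The main obstacle I expect is step (4): getting the numerical constant down to $\tfrac{1}{21}$ requires an extremely careful accounting of every term in the iterated integration by parts, because the naive estimates only give Ding–Xin's $0.022 \approx \tfrac{1}{45}$, and squeezing out the improvement forces one to exploit the self-shrinker equation $H = \langle X,\xi\rangle$ more efficiently — in particular using the identity $\mathcal{L}H = H - H|A|^2$ and the Bochner formula for $\nabla H$ to absorb otherwise-uncontrolled gradient terms, together with a sharp form of the algebraic inequality controlling $f_3^2$ in terms of $|A|^2$ and the squared norm of the traceless second fundamental form. A secondary technical point is ensuring that the extra terms produced by the drift (the $\langle X, \nabla\cdot\rangle$ pieces), which have no analogue in the compact spherical case, either vanish upon integration against the Gaussian weight or combine favorably; here one repeatedly uses $\mathcal{L}\langle X,\xi\rangle$-type formulas and the fact that $\int_M \mathcal{L}\varphi \, e^{-|X|^2/2} = 0$ for $\varphi$ with suitable growth. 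Once the weighted integral inequality is in place with the correct sign, the rigidity and classification follow as in the minimal case.
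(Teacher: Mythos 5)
Your high-level picture is right: mirror the compact-sphere argument with $\Delta$ replaced by the drift Laplacian $\mathcal{L}=\Delta-\langle X,\nabla\cdot\rangle$, integrate against $\rho\,dM=e^{-|X|^2/2}dM$ (polynomial volume growth makes this legitimate), use the self-shrinker Simons identity $\tfrac12\mathcal{L}|A|^2=|A|^2-|A|^4+|\nabla A|^2$ together with a companion formula for $\mathcal{L}|\nabla A|^2$, and push the resulting integral inequality until the pinching constant forces $\nabla A\equiv0$. All of that matches the paper.

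However, the mechanism you propose for squeezing the constant from Ding--Xin's $0.022$ down to $1/21$ is not the one that works, and as stated it would not close the gap. You attribute the improvement to a ``weighted Kato-type inequality $|\nabla|A||^2\le c\,|\nabla A|^2$ with $c=3/(n+2)$'' and to exploiting $\mathcal{L}H=H-H|A|^2$; neither of these appears in, nor is needed for, the actual argument. The genuine engine of the improvement is the combination of three ingredients you omit: (i) the algebraic identity $\int_M(B_1-2B_2)\rho\,dM=\int_M\bigl(\tfrac12 G-\tfrac14|\nabla S|^2\bigr)\rho\,dM$ where $G=2(Sf_4-f_3^2)=\sum_{i,j}\mu_i^2\mu_j^2(\mu_i-\mu_j)^2$; (ii) the pointwise lower bound $|\nabla^2A|^2\ge\tfrac34 G$ obtained by symmetrizing $h_{ijkl}$; and (iii) Ding--Xin's refined cubic estimate $3(B_1-2B_2)\le(S+C_2G^{1/3})|\nabla A|^2$, which introduces the extra $G^{1/3}$ slack that Young's inequality converts into a $|\nabla A|^3$ term. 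One then controls $\int|\nabla A|^3\rho\,dM$ by the Simons identity, a weighted integration by parts, and a two-parameter Cauchy--Schwarz argument ($\epsilon$, $\epsilon_1=\theta_1\epsilon$), and the constant $1/21$ emerges only after a careful numerical optimization over the free parameters $\theta$, $\theta_1$, $\epsilon$. Without the $G$-machinery and the $|\nabla A|^3$ estimate, the naive version of your step (4) reproduces only the Peng--Terng-level constant; a Kato inequality by itself does not provide the extra cancellation. Your step (5) is also slightly off in shape: the final inequality is not $\int(\alpha|A|^2+\beta)f\rho\le0$ but rather $0\le a\int|\nabla A|^2\rho+b\int(|A|^2-1)|\nabla A|^2\rho$ with $a,b<0$, from which $\nabla A\equiv0$ and then $|A|^2\equiv1$ follow.
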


\section{Minimal hypersurfaces in the unit sphere}\label{sec2}
Let $M$ be an $n(\geq2)$-dimensional compact minimal hypersurface in the unit sphere $\mathbb{S}^{n+1}$. We shall make use of the following convention on the range of indices:
                     $$1\leq i, j, k, \ldots\leq n.$$
We choose a local orthonormal frame $\{e_1, e_2, \ldots, e_{n+1}\}$
near a fixed point $x\in M$ over $\mathbb{S}^{n+1}$ such that
$\{e_1, e_2, \ldots, e_n\}$ are tangent to $M$.
\par Let $\{\omega_1, \omega_2,\ldots, \omega_{n+1}\}$ be the dual frame fields of $\{e_1, e_2,
\ldots, e_{n+1}\}$. Denote by $R_{ijkl}$ and
$A=\sum\limits_{i,j}h_{ij}\omega_{i}\otimes\omega_{j}$ the
Riemannian curvature tensor and the second fundamental form of $M$,
respectively. Then we have $\tr A=\sum\limits_{i}h_{ii}=0$. Put
$S=|A|^2=\sum\limits_{i,j}h_{ij}^{2}$. We denote the first, the
second and the third covariant derivatives of the second fundamental
form of $M$ by
$$\nabla A=\sum\limits_{i,j,k}h_{ijk}\omega_{i}\otimes\omega_{j}\otimes\omega_{k},$$
$$\nabla^2 A=\sum\limits_{i,j,k,l}h_{ijkl}\omega_{i}\otimes\omega_{j}\otimes\omega_{k}\otimes\omega_{l},$$
$$\nabla^3 A=\sum\limits_{i,j,k,l,m}h_{ijklm}\omega_{i}\otimes\omega_{j}\otimes\omega_{k}\otimes\omega_{l}\otimes\omega_{m}.$$
We have the Gauss and Codazzi equations.
\begin{equation}\label{1.2}
R_{ijkl}=\delta_{ik}\delta_{jl}-\delta_{il}\delta_{jk}+h_{ik}h_{jl}-h_{il}h_{jk},
\end{equation}
and
\begin{equation}\label{1.3}
h_{ijk}=h_{ikj}.
\end{equation}
By the Gauss equations, the scalar curvature of $M$ is given by
$$R=n(n-1)-S.$$
We also have the Ricci identities.
\begin{equation}\label{1.4}
h_{ijkl}=h_{ijlk}+\sum\limits_{m}h_{mj}R_{mikl}+\sum\limits_{m}h_{im}R_{mjkl},
\end{equation}
\begin{equation}\label{1.5}
h_{ijklm}=h_{ijkml}+\sum\limits_{r}h_{rjk}R_{rilm}+\sum\limits_{r}h_{irk}R_{rjlm}+\sum\limits_{r}h_{ijr}R_{rklm}.
\end{equation}

Choose a suitable orthonormal frame $\{e_1, e_2, \ldots, e_n\}$  at
$x$ such that $h_{ij}=\lambda_{i}\delta_{ij}$ for all $i$, $j$. Then
we have
\begin{equation}\label{1.6}
\frac{1}{2}\Delta S=S(n-S)+|\nabla A|^2.
\end{equation}
Hence
\begin{equation}\label{1.6-1}
|\nabla S|^2=\frac{1}{2}\Delta S^2-S\Delta S=\frac{1}{2}\Delta S^2+2S^2(S-n)-2S|\nabla A|^2.
\end{equation}
We get
\begin{eqnarray}\label{1.7}
\frac{1}{2}\Delta|\nabla A|^2&=&\nonumber (2n+3-S)|\nabla
A|^2-\frac{3}{2}|\nabla S|^2+|\nabla^2
A|^2\\
& &+3(2B_2-B_1),
\end{eqnarray}
where $B_1=\sum\limits_{i,j,k,l,m}h_{ijk}h_{ijl}h_{km}h_{ml}$ and $B_2=\sum\limits_{i,j,k,l,m}h_{ijk}h_{klm}h_{im}h_{jl}$.

Following \cite{PT2} (also see \cite{DX1,XX2}), we have
\begin{equation}\label{1.8}
|\nabla^2 A|^2-\frac{3S(S-n)^2}{2(n+4)}\geq\frac{3}{4}\sum\limits_{i\neq
j}t_{ij}^2=\frac{3}{4}\sum\limits_{i,j}t_{ij}^2,
\end{equation}
\begin{equation}\label{1.9}
3(B_1-2B_2)\leq \sigma S|\nabla A|^2,
\end{equation}
where $t_{ij}=h_{ijij}-h_{jiji}$ and $\sigma=\frac{\sqrt{17}+1}{2}$.

Moreover, we have
\begin{equation}\label{1.10}
\int_{M}(B_1-2B_2)dM=\int_{M}(Sf_4-f_{3}^2-S^2-\frac{|\nabla S|^2}{4})dM,
\end{equation}
where $f_k=\tr A^k=\sum\limits_{i}\lambda_{i}^k$.

Set $G=\sum\limits_{i,j}t_{ij}^2=\sum\limits_{i,j}(\lambda_i-\lambda_j)^2(1+\lambda_i\lambda_j)^2$.
Thus, we have
\begin{equation}\label{1.11}
G=2Sf_4-2f_3^2-2S^2-2S(S-n).
\end{equation}
This implies that
\begin{equation}\label{1.12}
\frac{1}{2}\int_MGdM=\int_M[(B_1-2B_2)-|\nabla A|^2+\frac{1}{4}|\nabla S|^2]dM.
\end{equation}

Now we are in a position to prove our rigidity theorem for minimal hypersurfaces in a sphere.
\begin{proof}[Proof of Theorem \ref{mthm1}]
It follows from (\ref{1.7}), (\ref{1.8}) and (\ref{1.12}) that
\begin{equation}\label{1.13}
\int_{M}[(S-2n-\frac{3}{2})|\nabla A|^2+\frac{3}{2}(B_1-2B_2)+\frac{9}{8}|\nabla S|^2]dM\geq0.
\end{equation}
Under the pinching condition $n\leq S\leq n+\frac{n}{k}$, (\ref{1.13}) together with (\ref{1.6-1}) implies
\begin{eqnarray}\label{1.14}
0&\leq&\nonumber\int_{M}[(S-2n-\frac{3}{2})|\nabla A|^2+\frac{3}{2}(B_1-2B_2)\\
& &\nonumber+\frac{9}{4}(S^3-nS^2-S|\nabla A|^2)]dM\\
&\leq&\int_{M}\Big[\Big(\frac{k+9}{4k}n-\frac{5}{4}S-\frac{3}{2}\Big)|\nabla A|^2+\frac{3}{2}(B_1-2B_2)\Big]dM.
\end{eqnarray}

When $n=2,3$, combining (\ref{1.9}) and (\ref{1.14}), we get
\begin{equation}\label{1.15}
0\leq\int_{M}\Big(\frac{k+9}{4k}n+\frac{\sqrt{17}-4}{4}S-\frac{3}{2}\Big)|\nabla A|^2dM.
\end{equation}

When $n=4,5$, by Lemma 3.4 in \cite{Zhang1}, we have
\begin{equation}\label{1.16}
3(B_1-2B_2)\leq\eta_nS|\nabla A|^2,
\end{equation}
where $\eta_4=2.16$ and $\eta_5=2.23$. It follows from (\ref{1.14})
and (\ref{1.16}) that
\begin{equation}\label{1.17}
0\leq\int_{M}\Big(\frac{1+\frac{9}{k}}{4}n-\frac{5-2\eta_n}{4}S-\frac{3}{2}\Big)|\nabla A|^2dM.
\end{equation}

Taking $k=22$, we see that the coefficients of the integrals in
(\ref{1.15}) and (\ref{1.17}) are both negative. Therefore, $\nabla
A=0$ and $|A|^2=n$, for $n\leq5$.

When $n\geq6$, the following lemma can be found in \cite{DX1}.
\begin{lemma}\label{A-2B}
If $n\geq6$, $n\leq S\leq\frac{16}{15}n$, then
$3(B_1-2B_2)\leq(S+4+C_1(n)G^{1/3})|\nabla A|^2$,
where $C_1(n)=\Big(\frac{3-\sqrt{6}-4p}{\sqrt{6}-1+13p}(6-\sqrt{6}-13p)^2\Big)^{1/3}$ and
$p=\frac{1}{13(n-2)}$.
\end{lemma}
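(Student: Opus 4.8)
The plan is to prove this as a pointwise inequality, sharpening the Peng--Terng-type estimate (\ref{1.9}). Fix a point $x\in M$ and choose the orthonormal frame so that $h_{ij}=\lambda_i\delta_{ij}$ at $x$. Since $h_{ijk}$ is totally symmetric by the Codazzi equation (\ref{1.3}), symmetrizing the coefficient of $h_{ijk}^2$ over the three indices turns $B_1-2B_2=\sum_{i,j,k}h_{ijk}^2(\lambda_k^2-2\lambda_i\lambda_j)$ into
\[
3(B_1-2B_2)=\sum_{i,j,k}h_{ijk}^2\big[2(\lambda_i^2+\lambda_j^2+\lambda_k^2)-(\lambda_i+\lambda_j+\lambda_k)^2\big],
\]
and, since $\tr A=\sum_m\lambda_m=0$, the factor $(\lambda_i+\lambda_j+\lambda_k)^2$ equals $\big(\sum_{m\notin\{i,j,k\}}\lambda_m\big)^2$. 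We must bound the right-hand side by $(S+4+C_1(n)G^{1/3})|\nabla A|^2$. First I would split the sum according to the coincidence pattern of $(i,j,k)$: the fully diagonal part is $-\sum_i h_{iii}^2\lambda_i^2\le0$, and it remains to control the part with all three indices distinct and the ``mixed diagonal'' part $\sum_{i\ne j}h_{iij}^2(\lambda_j^2-4\lambda_i\lambda_j)$, the latter being the more delicate because the $h_{iij}$ are tied to $\nabla S$ through a linear constraint.

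For both remaining pieces the idea is to replace crude Cauchy--Schwarz estimates by arguments that exploit the two linear relations
\[
\sum_i h_{iik}=0,\qquad \sum_i\lambda_i h_{iik}=\tfrac12\,\nabla_k S\qquad(k=1,\dots,n),
\]
coming from $\tr A\equiv0$ and from differentiating $S=\sum_{i,j}h_{ij}^2$, together with the Codazzi symmetry and the pinching hypothesis $n\le S\le\frac{16}{15}n$ (so $\sum_i\lambda_i^2=S\le\frac{16}{15}n$, and only a bounded number of eigenvalues can be of order $\sqrt n$). This combination is what permits one to extract the leading coefficient $S$ itself, with only an additive correction — rather than the constant multiple $\sigma S$ appearing in (\ref{1.9}) — out of the ``regular'' part of the sum, while the leftover terms remain concentrated on the columns carrying the large eigenvalues. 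On that leftover one rewrites the eigenvalue factors in terms of $t_{ij}=(\lambda_i-\lambda_j)(1+\lambda_i\lambda_j)$ and estimates them against $G=\sum_{i,j}t_{ij}^2$ by Hölder's inequality combined with the eigenvalue bound from the pinching; this introduces a free balancing parameter, and optimizing it — while keeping the absorbed terms of the correct sign for every $n\ge6$ under $n\le S\le\frac{16}{15}n$ — is exactly what fixes $p=\frac{1}{13(n-2)}$. Collecting the $S$, the $+4$, and the $G^{1/3}$ contributions then produces $C_1(n)=\Big(\frac{3-\sqrt6-4p}{\sqrt6-1+13p}(6-\sqrt6-13p)^2\Big)^{1/3}$.

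I expect the main obstacle to be the bookkeeping in the mixed diagonal part: one must simultaneously squeeze out the \emph{exact} coefficient $S$ from the regular columns — which genuinely requires combining both linear relations above with the Codazzi symmetry, not merely one of them — and keep the leftover in a form that is $O(G^{1/3})$ rather than merely $O(S)$. The latter is subtle precisely because $G$ can vanish (as on a Clifford torus, where one $\lambda_i$ is as large as $\sqrt{n-1}$ while all the others equal $-1/\sqrt{n-1}$), so the cancellation that makes $t_{ij}=(\lambda_i-\lambda_j)(1+\lambda_i\lambda_j)$ small has to be propagated cleanly through the Hölder step. Driving the balancing parameter to its optimal value while respecting all the sign constraints forced by the pinching range is the real technical heart of the lemma, and it is this optimization that pins down the stated $p$ and hence the explicit $C_1(n)$; in particular, as $n\to\infty$ one has $p\to0$ and $C_1(n)\to\big(\tfrac{3-\sqrt6}{\sqrt6-1}(6-\sqrt6)^2\big)^{1/3}$, so the estimate degenerates to $3(B_1-2B_2)\le\big(S+4+O(1)\,G^{1/3}\big)|\nabla A|^2$, which is the shape needed to close the pinching argument for large $n$.
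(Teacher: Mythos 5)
Note first that the paper you are reading does not itself prove this lemma: it states it and refers the reader to Ding--Xin \cite{DX1}, so the comparison is against that proof.

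Your algebraic setup is correct and matches the standard starting point: with $h_{ij}=\lambda_i\delta_{ij}$ one indeed has $B_1-2B_2=\sum_{i,j,k}h_{ijk}^2(\lambda_k^2-2\lambda_i\lambda_j)$, and symmetrizing over $(i,j,k)$ gives $3(B_1-2B_2)=\sum_{i,j,k}h_{ijk}^2\big[(\lambda_i^2+\lambda_j^2+\lambda_k^2)-2(\lambda_i\lambda_j+\lambda_j\lambda_k+\lambda_k\lambda_i)\big]$, which is what you wrote; the fully diagonal block is manifestly nonpositive, the pairwise block reduces to $\sum_{a\neq b}h_{aab}^2(\lambda_b^2-4\lambda_a\lambda_b)$, and the relations $\sum_ih_{iik}=0$ and $\sum_i\lambda_ih_{iik}=\tfrac12\nabla_kS$ are the right constraints to exploit. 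So far, so good.

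The problem is that everything after this point is a declaration of intent rather than an argument. You never actually extract the coefficient $S$ from the off-diagonal and pairwise blocks, never produce the additive $+4$, never carry out the H\"older step that converts the residual eigenvalue factors $(\lambda_i-\lambda_j)(1+\lambda_i\lambda_j)$ into a bound of the form $C_1(n)G^{1/3}|\nabla A|^2$, and never perform the optimization that is supposed to force the very specific value $p=\tfrac{1}{13(n-2)}$ inside $C_1(n)=\big(\tfrac{3-\sqrt6-4p}{\sqrt6-1+13p}(6-\sqrt6-13p)^2\big)^{1/3}$. These are not bookkeeping details: the lemma is a sharp quantitative statement whose entire content lies in getting exactly $S+4$ as the leading coefficient (rather than the $\sigma S$ of the Peng--Terng bound (\ref{1.9})) and in the precise shape of $C_1(n)$, which governs the final pinching constant. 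A further concrete worry: your sketch uses the pinching $n\le S\le\tfrac{16}{15}n$ only through the vague statement ``only a bounded number of eigenvalues can be of order $\sqrt n$,'' but the Ding--Xin argument needs the pinching much more precisely --- it is used to certify the sign of the absorbed terms for every $n\ge6$, which is exactly the constraint that determines $p$, and you give no indication of how that sign analysis would go. As written, the proposal reproduces the correct opening move but leaves the theorem unproved; to be a proof it would need the quantitative core that the stated constant encodes.
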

For $\theta\in(0, 1)$, from (\ref{1.7}), (\ref{1.8}) and (\ref{1.12}), using Lemma \ref{A-2B} and Young's inequality, we drive the following inequality.
\begin{eqnarray}\label{1.18}
&
&\nonumber\frac{3(1-\theta)}{4}\int_MGdM+\int_M\frac{3S(S-n)^2}{2(n+4)}dM\\
&\leq&\nonumber\int_M\Big[(S-2n-3)|\nabla A|^2+\frac{3}{2}|\nabla S|^2+3(B_1-2B_2)-\frac{3\theta}{4}G\Big]dM\\
&=&\nonumber\int_M(S-2n-3+\frac{3\theta}{2})|\nabla A|^2dM+(\frac{3}{2}-\frac{3\theta}{8})\int_M|\nabla S|^2dM\\
& &\nonumber+(3-\frac{3\theta}{2})\int_M(B_1-2B_2)dM\\
&\leq&\nonumber\int_M(S-2n-3+\frac{3\theta}{2})|\nabla A|^2dM+(\frac{3}{2}-\frac{3\theta}{8})\int_M|\nabla
S|^2dM\\
& &\nonumber+(1-\frac{\theta}{2})\int_M\Big(S+4+C_1(n)G^{1/3}\Big)|\nabla A|^2dM\\
&\leq&\nonumber\int_M\Big[(2-\frac{\theta}{2})S-2n+1-\frac{\theta}{2}\Big]|\nabla A|^2dM+(\frac{3}{2}-\frac{3\theta}{8})\int_M|\nabla S|^2dM\\
& &+\frac{3(1-\theta)}{4}\int_MGdM+C_3\int_M|\nabla A|^3dM,
\end{eqnarray}
where
$C_3=C_3(n,\theta)=\frac{4}{9}C_1(n)^{3/2}(1-\frac{\theta}{2})^{3/2}(1-\theta)^{-1/2}$.

For $\epsilon>0$, (\ref{1.6}) together with the divergence theorem and Cauchy-Schwarz's
inequality implies
\begin{eqnarray}\label{1.19}
\int_M|\nabla A|^3dM&\leq&\nonumber\int_MS(S-n)|\nabla A|dM+\epsilon\int_M|\nabla^2 A|^2dM\\
& &+\frac{1}{16\epsilon}\int_M|\nabla S|^2dM.
\end{eqnarray}

Under the pinching condition $n\leq S\leq n+\delta(n)(\leq\frac{n}{15})$, for $\epsilon_1=\theta_1\epsilon>0$, we estimate $\int_MS(S-n)|\nabla A|dM$ as follows.
\begin{eqnarray}\label{1.20}
\int_MS(S-n)|\nabla A|dM&\leq&\nonumber2(n+\delta)\epsilon_1\int_MS(S-n)dM\\
& &\nonumber+\frac{1}{8(n+\delta)\epsilon_1}\int_MS(S-n)|\nabla A|^2dM\\
&\leq&\Big[2(n+\delta)\epsilon_1+\frac{S-n}{8\epsilon_1}\Big]\int_M|\nabla A|^2dM.
\end{eqnarray}

From (\ref{1.7}) and (\ref{1.9}), we have
\begin{eqnarray}\label{1.21}
\int_M|\nabla^2 A|^2dM&\leq&\nonumber\frac{3}{2}\int_M|\nabla S|^2dM\\
& &+\int_M[(\sigma+1)S-2n-3]|\nabla A|^2dM.
\end{eqnarray}

Substituting (\ref{1.19}), (\ref{1.20}) and (\ref{1.21}) into (\ref{1.18}), we obtain
\begin{eqnarray}\label{1.22}
0&\leq&\nonumber\int_M\Big[(2-\frac{\theta}{2})S-2n+1-\frac{\theta}{2}\Big]|\nabla A|^2dM\\
& &\nonumber+(\frac{3}{2}-\frac{3\theta}{8})\int_M|\nabla S|^2dM-\int_M\frac{3S(S-n)^2}{2(n+4)}dM\\
& &\nonumber+C_3\Big\{\Big[2(n+\delta)\epsilon_1+\frac{S-n}{8\epsilon_1}\Big]\int_M|\nabla A|^2dM\\
& &\nonumber+\frac{3\epsilon}{2}\int_M|\nabla S|^2dM+\frac{1}{16\epsilon}\int_M|\nabla S|^2dM\\
& &\nonumber+\int_M\epsilon[(\sigma+1)S-2n-3]|\nabla A|^2dM\Big\}\\
&=&\nonumber\int_M\Big\{(2-\frac{\theta}{2})S-2n+1-\frac{\theta}{2}+\epsilon C_3[(\sigma+1)S-2n-3]\\
& &\nonumber+C_3\Big[2(n+\delta)\epsilon_1+\frac{S-n}{8\epsilon_1}\Big]\Big\}|\nabla A|^2dM-\int_M\frac{3S(S-n)^2}{2(n+4)}dM\\
& &\nonumber+\Big(3-\frac{3\theta}{4}+3\epsilon C_3+\frac{C_3}{8\epsilon}\Big)\int_M[S(S-n)^2-(S-n)|\nabla A|^2]dM\\
&\leq&\nonumber\int_M\Big\{1-\frac{\theta}{2}(n+1)+\epsilon C_3(n\sigma-n-3)+2\epsilon_1 C_3(n+\delta)\\
& &\nonumber-\Big[1-\frac{\theta}{4}+\frac{C_3}{8\epsilon}-\frac{C_3}{8\epsilon_1}+\epsilon C_3(2-\sigma)\Big](S-n)\Big\}|\nabla A|^2dM\\
& &\nonumber+\delta\Big(3-\frac{3\theta}{4}+3\epsilon C_3+\frac{C_3}{8\epsilon}-\frac{3}{2(n+4)}\Big)\int_M|\nabla A|^2dM\\
&=&\nonumber\int_M\Big\{1-\frac{\theta}{2}(n+1)+\epsilon C_3(n\sigma+n-3+5\delta)+\frac{C_3\delta}{8\epsilon}\\
& &\nonumber+\delta\Big(3-\frac{3\theta}{4}-\frac{3}{2(n+4)}\Big)+2\epsilon(\theta_1-1) C_3(n+\delta)\\
& &-\Big[1-\frac{\theta}{4}+\frac{C_3}{8\epsilon}-\frac{C_3}{8\epsilon_1}+\epsilon C_3(2-\sigma)\Big](S-n)\Big\}|\nabla A|^2dM.
\end{eqnarray}
Let $\epsilon=\sqrt{\frac{\delta}{8[\sigma n+n-3+5\delta+2(n+\delta)(\theta_1-1)]}}$,
$\theta=0.866$ and $\theta_1=0.83$. Then
$$C_3(n)=\frac{4}{9}\times0.567^{3/2}\times0.134^{-1/2}\times\sqrt{\frac{3-\sqrt{6}-4p}{\sqrt{6}-1+13p}}(6-\sqrt{6}-13p),$$
where $p=\frac{1}{13(n-2)}$.

Thus, (\ref{1.22}) is reduced to
\begin{eqnarray}\label{1.23}
0&\leq&\nonumber\Big[-0.433n+C_3\sqrt{\frac{\delta}{2}[\sigma
n+n-3+5\delta-0.34(n+\delta)]}\\&
&\nonumber+(2.3505-\frac{3}{2(n+4)})\delta+0.567\Big]\int_M|\nabla A|^2dM\\
& &+\Big[-0.7835-C_3\Big(2\epsilon-\sigma\epsilon-\frac{0.17}{6.64\epsilon}\Big)\Big]\int_M(S-n)|\nabla A|^2dM.
\end{eqnarray}
We put
$$\delta=\frac{n}{22},$$
\begin{eqnarray*}
g_1(x)&=&-0.433x+\sqrt{\frac{16\times0.567^3}{81\times0.134}}\times\sqrt{\frac{3-\sqrt{6}-\frac{4}{13(x-2)}}{\sqrt{6}-1+\frac{1}{x-2}}}\\
& &\times(6-\sqrt{6}-\frac{1}{x-2})\sqrt{\frac{x}{44}\Big(\frac{\sqrt{17}+3}{2}x-3+\frac{5x}{22}-\frac{0.34\times23x}{22}\Big)}\\
& &+\frac{2.3505x}{22}-\frac{3x}{44(x+4)}+0.567,
\end{eqnarray*}

\begin{eqnarray*}
g_2(x)&=&-0.7835-\sqrt{\frac{16\times0.567^3}{81\times0.134}}\times\sqrt{\frac{3-\sqrt{6}-\frac{4}{13(x-2)}}{\sqrt{6}-1+\frac{1}{x-2}}}\\
& &\times
(6-\sqrt{6}-\frac{1}{x-2})\Big[\sqrt{\frac{(1-\sqrt{17})^2x}{4\times176\Big(\frac{\sqrt{17}+3}{2}x-3+\frac{5x}{22}-\frac{0.34\times23x}{22}\Big)}}\\
& &-\frac{17}{664}\times\sqrt{\frac{176}{x}\Big(\frac{\sqrt{17}+3}{2}x-3+\frac{5x}{22}-\frac{0.34\times23x}{22}\Big)}\Big].
\end{eqnarray*}

Now we are in a position to prove that both $g_1(x)$ and $g_2(x)$ are negative for $x\geq6$.\\

\par Case I. Noting that
$$0.433x-\frac{2.3505x}{22}+\frac{3x}{44(x+4)}-0.567>0,\,\,\,\mbox{for}\,\,\, x\geq6,$$
we take
\begin{eqnarray*}
g_1(x)&=&\Big[\sqrt{\frac{16\times0.567^3}{81\times0.134}}\times\sqrt{\frac{3-\sqrt{6}-\frac{4}{13(x-2)}}{\sqrt{6}-1+\frac{1}{x-2}}}
(6-\sqrt{6}-\frac{1}{x-2})\\
& &\times\sqrt{\frac{x}{44}\Big(\frac{\sqrt{17}+3}{2}x-3+\frac{5x}{22}-\frac{0.34\times23x}{22}\Big)}\\
& &+0.433x-\frac{2.3505x}{22}+\frac{3x}{44(x+4)}-0.567\Big]^{-1}\\
& &\times\Big(\frac{1}{(x-2)(x+4)}\Big)^2\frac{Z(x)-W(x)}{(\sqrt{6}-1)(x-2)+1},
\end{eqnarray*}
where
\begin{eqnarray*}
Z(x)&=&\frac{0.567^3\times16}{0.134\times81\times44}[(6-\sqrt{6})(x-2)-1]^2\Big[(3-\sqrt{6})(x-2)-\frac{4}{13}\Big]\\
& &\times\Big[\frac{(11\sqrt{17}+38-0.34\times23)x}{22}-3\Big](x+4)^2x,
\end{eqnarray*}
and
\begin{eqnarray*}
W(x)&=&\Big[0.433x(x+4)-\frac{2.3505x(x+4)}{22}+\frac{3x}{44}-0.567(x+4)\Big]^2\\
& &\times[(\sqrt{6}-1)(x-2)+1](x-2)^2.
\end{eqnarray*}
We get
\begin{equation}
Z(x)-W(x)\leq Q_1(x),\,\,\,\mbox{for}\,\,\,x\geq0,
\end{equation}
where $Q_1(x)=-0.00868x^7+0.0575x^6+0.207x^5-3.126x^4+2.331x^3+30.434x^2-69.56x+40$.

By calculating the determinant of the Sylvester matrix associated to $Q_1$ and $Q_{1}^{'}$, we get
\begin{equation}
R(Q_1,Q_{1}^{'})\approx-32.12.
\end{equation}
It implies that the discriminant of $Q_1(x)$ is negative, which equals to
$\frac{(-1)^{21}}{-0.00868}R(Q_1,Q_{1}^{'})$. By a classical result for discriminant
of polynomials, there exists at least one pair of complex conjugate roots for $Q_1(x)$.
By a direct computation, we have
$$Q_1(-6)=501.124,\,\,Q_1(-5)=-166.787,\,\,Q_1(0)=40,$$
$$Q_1(1.5)=-1.74319,\,\,Q_1(2)=0.44096,\,\,Q_1(3)=-11.8077.$$
Thus, $Q_1(x)$ possesses five real roots in the interval $(-6,3)$.

Hence
$$Q_1(x)<0,\, \mbox{\ for\ } x\geq3.$$
Therefore, we have
$$g_1(x)<0,\, \mbox{\ for\ } x\geq6.$$

\par Case II. Put
$$U_1(x)=\sqrt{\frac{3-\sqrt{6}-\frac{4}{13(x-2)}}{\sqrt{6}-1+\frac{1}{x-2}}},\,\,\,U_2(x)=(6-\sqrt{6}-\frac{1}{x-2})$$
and
$$U_3(x)=\sqrt{\frac{22x}{(11\sqrt{17}+38-0.34\times23)x-66}}.$$
By a direct computation, we have
$$\frac{\partial}{\partial x}U_1(x)=\frac{35-9\sqrt{6}}{26U_1[(\sqrt{6}-1)(x-2)+1]^2},$$
$$\frac{\partial}{\partial x}U_2(x)=\frac{1}{(x-2)^2},\,\,\,\frac{\partial}{\partial x}U_3(x)=-\frac{3U_{3}^{3}(x)}{2x^2}.$$
Thus, we have
\begin{eqnarray*}
\frac{\partial}{\partial x}g_2(x)&=&\sqrt{\frac{16\times0.567^3}{81\times0.134}}\frac{\partial}{\partial x}\Big\{U_1(x)U_2(x)\\
& &\times\Big[\frac{17\sqrt{11}}{166U_3(x)}+\frac{(\sqrt{17}-1)\sqrt{11}}{88}U_3(x)\Big]\Big\}\\
&=&\sqrt{\frac{16\times0.567^3}{81\times0.134}}\Big\{\Big[\frac{(35-9\sqrt{6})U_2}{26U_1[(\sqrt{6}-1)(x-2)+1]^2}+\frac{U_1}{(x-2)^2}\Big]\\
& &\times\Big[\frac{17\sqrt{11}}{166U_3}+\frac{(\sqrt{17}-1)\sqrt{11}}{88}U_3\Big]\\
& &-\Big[\frac{(\sqrt{17}-1)\sqrt{11}}{88}-\frac{17\sqrt{11}}{166U_{3}^2}\Big]\frac{3U_1U_2U_{3}^{3}}{2x^2}\Big\}
\end{eqnarray*}
\begin{eqnarray*}
&=&\frac{\sqrt{\frac{16\times0.567^3}{81\times0.134}}[(11\sqrt{17}+38-0.34\times23)x-66]^{-2}}{U_1U_3(x-2)^2x[(\sqrt{6}-1)(x-2)+1]^2}\\
& &\times\Big\{\Big[\frac{(35-9\sqrt{6})U_2(x-2)^2}{26}+U_{1}^2[(\sqrt{6}-1)(x-2)+1]^2\Big]\\
& &\times\Big[\frac{17\sqrt{11}}{166}+\frac{(\sqrt{17}-1)\sqrt{11}}{88}U_{3}^2\Big][(11\sqrt{17}+30.18)x-66]^2x\\
& &-\frac{3U_{3}^{2}}{x}[(11\sqrt{17}+30.18)x-66]^2[(\sqrt{6}-1)(x-2)+1]^2\\
& &\times U_{1}^2\Big[\frac{(\sqrt{17}-1)\sqrt{11}U_{3}^2}{176}-\frac{17\sqrt{11}}{332}\Big]U_2(x-2)^2\Big\}\\
&=&\frac{\sqrt{\frac{16\times0.567^3}{81\times0.134}}[(11\sqrt{17}+38-0.34\times23)x-66]^{-2}}{U_1U_3(x-2)^2x[(\sqrt{6}-1)(x-2)+1]^2}R(x),
\end{eqnarray*}
where
\begin{eqnarray*}
R(x)&=&\Big\{\frac{(35-9\sqrt{6})[(6-\sqrt{6})(x-2)-1](x-2)}{26}\\
& &+\Big[(3-\sqrt{6})(x-2)-\frac{4}{13}\Big][(\sqrt{6}-1)(x-2)+1]\Big\}\\
& &\times\Big\{\frac{17\sqrt{11}}{166}[(11\sqrt{17}+30.18)x-66]^2x\\
& &+\frac{(\sqrt{17}-1)\sqrt{11}}{4}[(11\sqrt{17}+30.18)x-66]x^2\Big\}\\
& &-66\Big[(3-\sqrt{6})(x-2)-\frac{4}{13}\Big][(\sqrt{6}-1)(x-2)+1]\\
& &\times\Big\{\frac{(\sqrt{17}-1)\sqrt{11}x}{8}-\frac{17\sqrt{11}}{332}[(11\sqrt{17}+30.18)x-66]\Big\}\\
& &\times[(6-\sqrt{6})(x-2)-1](x-2).
\end{eqnarray*}
We have
\begin{equation}
R(x)\geq 10000Q_2(x),\,\,\,\mbox{for}\,\,\,x\geq0,
\end{equation}
where $Q_2(x)=0.7633x^5-5.1552x^4+13.4534x^3-17.435x^2+11.598x-3.2064$.

By calculating the determinant of the Sylvester matrix associated to $Q_2$ and $Q_{2}^{'}$, we get
\begin{equation}
R(Q_2,Q_{2}^{'})\approx-0.145.
\end{equation}
Hence, the discriminant of $Q_2(x)$ is negative.
So, there exists at least one pair of complex conjugate roots for $Q_2(x)$.

On the other hand, we get
$$Q_2(0)=-3.2064,\,\,Q_2(1)=0.0181,$$
$$Q_2(2)=-0.1808,\,\,Q_2(3)=5.8251.$$
Hence
$$Q_2(x)>0,\, \mbox{\ for\ } x\geq3.$$
It implies that $g_2(x)$ is increasing for $x\geq3$.
Notice that
$$\lim\limits_{x\rightarrow\infty}g_2(x)\approx-0.044.$$
Therefore, we have
$$g_2(x)<0,\, \mbox{\ for\ } x\geq6.$$

From(\ref{1.23}), we have $\nabla A=0$ and $S=n$, i.e., $M$ is a Clifford torus.

\end{proof}

\begin{rmk}
In fact, we can enlarge the second pinching interval in Theorem
\ref{mthm1} to $[n,\,n+\frac{n}{21.6}]$ by modifying the parameters
$\theta$ and $\theta_1$.
\end{rmk}

\section{Self-shrinkers in the Euclidean space}\label{sec3}

\par Let $M$ be an $n$-dimensional complete self-shrinker with polynomial volume growth in $\mathbb{R}^{n+1}$. We shall make use of the following convention on the range of indices:
                     $$1\leq i, j, k, \ldots\leq n.$$
We choose a local orthonormal frame $\{e_1, e_2, \ldots, e_{n+1}\}$
near a fixed point $x\in M$ over $\mathbb{R}^{n+1}$ such that
$\{e_i\}$ are tangent to $M$ and $e_{n+1}$ equals to the unit inner normal vector $\xi$.
Let $\{\omega_1, \omega_2, \ldots, \omega_{n+1}\}$ be the dual frame fields of $\{e_1, e_2, \ldots, e_{n+1}\}$.
Denote by $A$, $H$, $S$ and $R_{ijkl}$ the second fundamental form, the mean curvature, the
squared length of the second fundamental form and the Riemannian curvature tensor of $M$, respectively.
Then we have
\begin{equation}\label{2.2}
A=\sum\limits_{i,j}h_{ij}\omega_{i}\otimes\omega_{j},\,\,
h_{ij}=h_{ji},
\end{equation}
\begin{equation}\label{2.3}
S=|A|^2=\sum\limits_{i,j}h^{2}_{ij},
\end{equation}
\begin{equation}\label{2.3.1}
H=\tr A=\sum\limits_{i}h_{ii}.
\end{equation}
The Gauss and Codazzi equations are given by
\begin{equation}\label{2.4}
R_{ijkl}=h_{ik}h_{jl}-h_{il}h_{jk},
\end{equation}
and
\begin{equation}\label{2.4.2}
h_{ijk}=h_{ikj}.
\end{equation}
We have the Ricci identities.
\begin{equation}\label{2.4.3}
h_{ijkl}-h_{ijlk}=\sum\limits_{m}h_{im}R_{mjkl}+\sum\limits_{m}h_{mj}R_{mikl},
\end{equation}
\begin{equation}\label{2.4.4}
h_{ijklm}-h_{ijkml}=\sum\limits_{r}h_{rjk}R_{rilm}+\sum\limits_{r}h_{irk}R_{rjlm}+\sum\limits_{r}h_{ijr}R_{rklm}.
\end{equation}
To simplify computation, we choose the local orthonormal frame such that its tangential covariant derivatives vanish at $x\in M$, i.e., $\overline{\nabla}_{e_i}e_j=h_{ij}\xi$.

Then we have
\begin{equation}\label{2.4.5}
\nabla_{e_i}H=-\nabla_{e_i}<X,\xi>=h_{ik}<X, e_k>,
\end{equation}
and
\begin{eqnarray}\label{2.4.6}
\Hess H(e_i, e_j)&=&\nonumber-\nabla_{e_i}\nabla_{e_j}<X, \xi>\\
&=&h_{ijk}<X, e_k>+h_{ij}-Hh_{ik}h_{kj}.
\end{eqnarray}

In \cite{CM}, Colding and Minicozzi introduced the linear operator
$$\mathcal{L}=\Delta-<X, \nabla(\cdot)>=e^{\frac{|X|^2}{2}}\dv(e^{-\frac{|X|^2}{2}}\nabla(\cdot)).$$
They showed that $\mathcal{L}$ is self-adjoint respect to the measure $\rho dM$, where $\rho=e^{-\frac{|X|^2}{2}}$.

Thus, we get
\begin{eqnarray}\label{2.4.7}
\mathcal{L}|A|^2&=&\nonumber \Delta|A|^2-<X, \nabla|A|^2>\\
&=&\nonumber 2\sum\limits_{i,j}h_{ij}\Delta h_{ij}+2|\nabla A|^2-2\sum\limits_{i,j,k}h_{ij}h_{ijk}<X, e_k>\\
&=&\nonumber 2(\sum\limits_{i,j}h_{ij}\nabla_{e_i}\nabla_{e_j}H+H\sum\limits_{i,j,k}h_{ij}h_{jk}h_{ki}-|A|^4)\\
& &\nonumber+2|\nabla A|^2-2\sum\limits_{i,j,k}h_{ij}h_{ijk}<X, e_k>\\
&=&2|A|^2-2|A|^4+2|\nabla A|^2.
\end{eqnarray}
We have the following equalities for $|\nabla S|^2$ and $|\nabla^2
A|^2$.
\begin{equation}\label{2.4.7-2}
|\nabla S|^2=\frac{1}{2}\mathcal{L}S^2+2S^2(S-1)-2S|\nabla A|^2,
\end{equation}
\begin{equation}\label{2.4.8}
|\nabla^2 A|^2=\frac{1}{2}\mathcal{L}|\nabla A|^2+(|A|^2-2)|\nabla A|^2+3(B_1-2B_2)+\frac{3}{2}|\nabla S|^2,
\end{equation}
where $B_1=\sum\limits_{i,j,k,l,m}h_{ijk}h_{ijl}h_{km}h_{ml}$ and $B_2=\sum\limits_{i,j,k,l,m}h_{ijk}h_{klm}h_{im}h_{jl}$.

Following \cite{DX2,PT2}, we have
\begin{equation}\label{2.4.8-1}
3(B_1-2B_2)\leq \sigma S|\nabla A|^2,
\end{equation}
where $\sigma=\frac{\sqrt{17}+1}{2}$.

We choose a local orthonormal frame $\{e_i\}$ such that
$h_{ij}=\mu_i\delta_{ij}$ at $x$. By Ricci identity (\ref{2.4.3}),
we have
\begin{equation}\label{2.4.9}
t_{ij}:=h_{ijij}-h_{jiji}=\mu_i\mu_j(\mu_i-\mu_j).
\end{equation}

By a computation, we obtain
\begin{equation}\label{2.4.10}
\int_M(B_1-2B_2)\rho dM=\int_M\Big(\frac{1}{2}G-\frac{1}{4}|\nabla S|^2\Big)\rho dM,
\end{equation}
where $G=\sum\limits_{i,j}t^{2}_{ij}=2(Sf_4-f^{2}_{3})$ and $f_k=\tr A^k=\sum\limits_{i}\mu^{k}_{i}$.

By Lemma 4.2 in \cite{DX2}, we have
\begin{equation}\label{2.4.11}
3(B_1-2B_2)\leq(S+C_2G^{1/3})|\nabla A|^2,
\end{equation}
where $C_2=\frac{2\sqrt{6}+3}{\sqrt[3]{21\sqrt{6}+103/2}}$.

Set $u_{ijkl}=\frac{1}{4}(h_{ijkl}+h_{lijk}+h_{klij}+h_{jkli})$. Notice
that $u_{ijkl}$ is symmetric in $i,j,k,l$. Then we have
\begin{eqnarray}\label{2.4.12-1}
\sum\limits_{i,j,k,l}(h_{ijkl}^{2}-u_{ijkl}^{2})&=&
\nonumber\frac{1}{16}\sum\limits_{i,j,k,l}[(h_{ijkl}-h_{lijk})^2+(h_{ijkl}-h_{klij})^2\\
& &\nonumber +(h_{ijkl}-h_{jkli})^2+(h_{lijk}-h_{klij})^2\\
& &\nonumber +(h_{lijk}-h_{jkli})^2+(h_{klij}-h_{jkli})^2]\\
&\geq&\nonumber\frac{6}{16}\sum\limits_{i\neq j}[(h_{ijij}-h_{jiji})^2+(h_{jiji}-h_{ijij})^2]\\
&=&\frac{3}{4}G,
\end{eqnarray}
i.e.,
\begin{equation}\label{2.4.12}
|\nabla^2 A|^2\geq\frac{3}{4}G.
\end{equation}

Now we are in a position to prove our rigidity theorem for self-shrinkers in the Euclidean space.
\begin{proof}[Proof of Theorem \ref{mthm2}]
Combining (\ref{2.4.8}), (\ref{2.4.10}), (\ref{2.4.11}) and (\ref{2.4.12}), using Young's inequality,
we drive the following inequality for arbitrary $\theta\in(0, 1)$.
\begin{eqnarray}\label{2.4.13}
& &\nonumber\frac{3}{4}(1-\theta)\int_M G\rho dM+\frac{3\theta}{8}\int_M|\nabla S|^2\rho dM\\
&\leq&\nonumber\int_M |\nabla^2 A|^2\rho dM-\frac{3\theta}{2}\int_M B\rho dM\\
&=&\nonumber\int_M [(S-2)|\nabla A|^2+\frac{3}{2}|\nabla S|^2]\rho dM\\
& &\nonumber+3\Big(1-\frac{\theta}{2}\Big)\int_M(B_1-2B_2)\rho dM\\
&\leq&\nonumber\int_M [(S-2)|\nabla A|^2+\frac{3}{2}|\nabla S|^2]\rho dM\\
& &\nonumber+\Big(1-\frac{\theta}{2}\Big)\int_M(S+C_2G^{1/3})|\nabla A|^2\rho dM\\
&\leq&\nonumber\int_M \Big\{\Big[\Big(2-\frac{\theta}{2}\Big)S-2\Big]|\nabla A|^2+\frac{3}{2}|\nabla S|^2\Big\}\rho dM\\
& &+C_4\int_M|\nabla A|^3\rho dM+\frac{3}{4}(1-\theta)\int_M G\rho dM,
\end{eqnarray}
where $C_4(\theta)=\frac{4}{9}C^{3/2}_2\Big(1-\frac{\theta}{2}\Big)^{3/2}(1-\theta)^{-1/2}$.
This implies that
\begin{eqnarray}\label{2.4.14}
0&\leq&\nonumber\int_M\Big[\Big(2-\frac{\theta}{2}\Big)S-2\Big]|\nabla A|^2\rho dM\\
& &+\Big(\frac{3}{2}-\frac{3\theta}{8}\Big)\int_M|\nabla S|^2\rho dM+C_4\int_M|\nabla A|^3\rho dM.
\end{eqnarray}
Applying the divergence theorem, we get
\begin{eqnarray}\label{2.4.15}
& &\nonumber-\int_M \nabla|\nabla A|\cdot\nabla S\rho dM\\
&=&\nonumber\int_M|\nabla A|\Delta S\rho dM+\int_M|\nabla A|\nabla S\cdot\nabla\rho dM\\
&=&\nonumber\int_M|\nabla A|\Delta S\rho dM+\int_M|\nabla A|(-<X,\nabla S>)\rho dM\\
&=&\int_M|\nabla A|\mathcal{L} S\rho dM.
\end{eqnarray}
From(\ref{2.4.7}), (\ref{2.4.8}), (\ref{2.4.8-1}) and (\ref{2.4.15}), for arbitrary $\epsilon>0$, we have
\begin{eqnarray}\label{2.4.16}
\int_M|\nabla A|^3\rho dM&=&\nonumber\int_M(S^2-S+\frac{1}{2}\mathcal{L}|A|^2)|\nabla A|\rho dM\\
&=&\nonumber\int_M(S^2-S)|\nabla A|\rho dM-\frac{1}{2}\int_M \nabla|\nabla A|\cdot\nabla S\rho dM\\
&\leq&\nonumber\int_M(S^2-S)|\nabla A|\rho dM+\epsilon\int_M|\nabla^2 A|^2\rho dM\\
& &\nonumber+\frac{1}{16\epsilon}\int_M |\nabla S|^2\rho dM\\
&\leq&\nonumber\int_M(S^2-S)|\nabla A|\rho dM+(\frac{3\epsilon}{2}+\frac{1}{16\epsilon})\int_M |\nabla S|^2\rho dM\\
& &+\epsilon\int_M(\sigma S+S-2)|\nabla A|^2\rho dM.
\end{eqnarray}
From (\ref{2.4.7-2}), we have
\begin{equation}\label{2.4.17}
\frac{1}{2}\int_M |\nabla S|^2\rho dM=\int_MS(S-1)^2\rho dM-\int_M(S-1)|\nabla A|^2\rho dM.
\end{equation}
When $1\leq S\leq1+\delta$, for $\epsilon_1=\theta_1\epsilon>0$, we obtain
\begin{eqnarray}\label{2.4.18}
\int_MS(S-1)|\nabla A|\rho dM&\leq&\nonumber(2+2\delta)\epsilon_1\int_MS(S-1)\rho dM\\
& &\nonumber+\frac{1}{(8+8\delta)\epsilon_1}\int_MS(S-1)|\nabla A|^2\rho dM\\
&\leq&\nonumber(2+2\delta)\epsilon_1\int_MS(S-1)\rho dM\\
& &+\frac{1}{8\epsilon_1}\int_M(S-1)|\nabla A|^2\rho dM.
\end{eqnarray}
Substituting (\ref{2.4.16}), (\ref{2.4.17}) and (\ref{2.4.18}) into
(\ref{2.4.14}), we have
\begin{eqnarray}
0&\leq&\nonumber\int_M\Big[\Big(2-\frac{\theta}{2}\Big)S-2\Big]|\nabla A|^2\rho dM+\Big(\frac{3}{2}-\frac{3\theta}{8}\Big)\int_M|\nabla S|^2\rho dM\\
& &\nonumber+C_4\Big[(2+2\delta)\epsilon_1\int_MS(S-1)\rho dM+\frac{1}{8\epsilon_1}\int_M(S-1)|\nabla A|^2\rho dM\\
& &\nonumber+\epsilon\int_M(\sigma S+S-2)|\nabla A|^2\rho dM+\Big(\frac{3\epsilon}{2}+\frac{1}{16\epsilon}\Big)\int_M |\nabla S|^2\rho dM\Big]
\end{eqnarray}
\begin{eqnarray}\label{2.4.20}
&\leq&\nonumber\int_M\Big[\Big(2-\frac{\theta}{2}\Big)S-2+\frac{C_4}{8\epsilon_1}(S-1)+2C_4\epsilon_1(1+\delta)\\
& &\nonumber+C_4\epsilon(\sigma S+S-2)\Big]|\nabla A|^2\rho dM+\Big[3-\frac{3\theta}{4}+C_4(3\epsilon+\frac{1}{8\epsilon})\Big]\\
& &\nonumber\times\Big[\int_M\delta S(S-1)\rho dM-\int_M(S-1)|\nabla A|^2\rho dM\Big]\\
&=&\nonumber\int_M\Big[-\frac{\theta}{2}+\frac{\sqrt{17}+3}{2}C_4\epsilon+\Big(5\epsilon+\frac{1}{8\epsilon}\Big)\delta C_4\\
& &\nonumber+\Big(3-\frac{3\theta}{4}\Big)\delta+2C_4(1+\delta)(\epsilon_1-\epsilon)\Big]|\nabla A|^2\rho dM\\
& &-\int_M\Big[1-\frac{\theta}{4}-\frac{\sqrt{17}-3}{2}C_4\epsilon+\frac{C_4}{8\epsilon}-\frac{C_4}{8\epsilon_1}\Big](S-1)|\nabla A|^2\rho dM.
\end{eqnarray}

Let $\epsilon_1=\theta_1\epsilon$, $\theta_1=0.81$,
$\epsilon=\sqrt{\frac{\delta}{4(\sqrt{17}+3)+16(1+\delta)(\theta_1-1)+40\delta}}$, $\theta=0.836$. Then we have $C_4\leq1.066218$. Thus, (\ref{2.4.20}) is reduced to
\begin{eqnarray}\label{2.4.21}
0&\leq&\nonumber\Big[\frac{C_4}{4}\sqrt{40\delta^2-3.04(1+\delta)\delta+4(\sqrt{17}+3)\delta}\\
& &\nonumber-0.418+2.373\delta\Big]\int_M|\nabla A|^2\rho dM\\
& &\nonumber+\Big[\frac{C_4}{8}\Big(4\sqrt{17}\epsilon-12\epsilon-\frac{1}{\epsilon}+\frac{1}{0.81\epsilon}\Big)-0.791\Big]\\
& &\times\int_M(|A|^2-1)|\nabla A|^2\rho dM.
\end{eqnarray}
We take $\delta=1/21$. Then the coefficients of the integrals in
(\ref{2.4.21}) are both negative. Therefore, we have $|\nabla
A|\equiv0$ and $|A|\equiv1$, i.e., $M$ is a round sphere or a
cylinder.
\end{proof}

To attack the Chern conjecture, we would like to suggest a program
that divides the second optimal pinching problem for minimal
hypersurfaces in a sphere into several steps. Precisely, we propose
the following unsolved problem.
\begin{prob}
Let $M$ be a compact minimal hypersurface in $\mathbb{S}^{n+1}$.
Denote by $A$ the second fundamental form of $M$. Is it possible to
prove that there exists a positive integer $k$ with $1\le k\le 21$,
such that if $0\leq|A|^2-n\le\frac{n}{k}$, then $|A|^2\equiv n$ and
$M$ is a Clifford torus, or $|A|^2\equiv 2n$ and $k=1$?
\end{prob}
For self-shrinkers in the Euclidean space, we would like to propose
the following problem.
\begin{prob}
Let $M$ be an $n$-dimensional complete self-shrinker with polynomial volume growth in $\mathbb{R}^{n+1}$.
Denote by $A$ the second fundamental form of $M$. Is it possible to prove that there exists a
positive integer $k$ with $1\le k\le 20$, such that if
$0\leq|A|^2-1\le\frac{1}{k}$, then $|A|^2\equiv 1$ and $M$ is a
round sphere or a cylinder, or $|A|^2\equiv 2$ and $k=1$?
\end{prob}

\bibliographystyle{amsplain}

\end{document}